\newtheorem{lemma}{Lemma}[section]
\newtheorem{theorem}{Theorem}[section]
\DeclareMathOperator{\prob}{\mathbb{P}}
\DeclareMathOperator{\bigcomp}{\bigcirc}
\DeclareMathOperator{\rec}{flip}
\DeclareMathOperator{\Leb}{Leb}
\newcommand{\symmdiff}{\,\triangle\,}
\author{Cristian Mitrea\footnote{Bernoulli Institute for Mathematics, Computer Science and Artificial Intelligence, University of Groningen, PO Box 407, 9700 AK Groningen, The Netherlands. E-mail: c.mitrea@student.rug.nl, cristian.mitrea@icloud.com, a.e.sterk@rug.nl.} \and Alef E.~Sterk$^*$}
\title{Singular functions obtained via \\ random function iteration}
\date{\today}
\begin{document}

\maketitle

\begin{abstract}
In this paper we consider a discrete-time dynamical system on the real line by random iteration of two functions. These functions are assumed to satisfy appropriate monotonicity conditions; optionally, a symmetry condition may be imposed. Using Bernoulli measures on the space of binary sequences we show that sequences generated by the iteration process almost surely diverge to either plus or minus infinity. The function that assigns to each initial point the probability that the iterates diverge to plus infinity is shown to satisfy a functional equation that encodes self-similarity properties. In this way we obtain singular functions that are well-known from the literature: Cantor-like functions, Lebesgue singular functions, and the Minkowski question mark function.
\end{abstract}

\textbf{Key words:} singular functions; random function iteration; subshift of finite type; Bernoulli measure

\textbf{MSC 2020:}
26A30,	
26A18,	
37H12	




\newpage

\tableofcontents


\newpage


\section{Introduction}
\label{sec:intro}

The Cantor function, originally introduced to serve as a counter example in attempts to prove extensions of the Fundamental Theorem of Calculus, is a standard example of a singular function. One way to define the Cantor function is by means of ternary expansions: write $x \in [0,1]$ as
\[
x = \sum_{k=1}^\infty \frac{a_k(x)}{3^k}, \quad a_k(x) \in \{0,1,2\},
\]
and denote by $n(x)$ the smallest positive integer $k$ for which $a_{k}(x)=1$, provided such an integer exists; otherwise set $n(x)=\infty$. Then the Cantor function is given by
\begin{equation}
\label{eq:cantor_function}
C : [0,1] \to \mathbb{R}, \quad
C(x) = \frac{1}{2^{n(x)}} + \frac{1}{2}\sum_{k=1}^{n(x)-1} \frac{a_k(x)}{2^k}.
\end{equation}
This definition does not depend on the choice of the ternary expansion of $x$. The graph of the Cantor function, often called the Devil's Staircase, is frequently discussed in works on fractal geometry \cite{Berger:2001, Helmberg:2007, PJS:2004}. A detailed review of the Cantor function and its properties can be found in~\cite{Dovgoshey:2006}.

Stankewitz and Rolf \cite{StankewitzRolf:2012} suggested that the Cantor function can also be obtained by means of random function iteration as follows. Consider the functions $f_0(x) = 3x$ and $f_1(x) = 3x-2$. Let $(\omega_1,\omega_2,\omega_3,\dots)$ be a sequence of $0$'s and $1$'s, where each entry $\omega_i$ is chosen independently at random and with equal probability. For any $x_0 \in [0,1]$ consider the sequence
\[
x_{n+1} = f_{\omega_{n+1}}(x_n), \quad n\geq 0.
\]
The graph of the function that assigns to each $x_0$ the probability that $x_n \to \infty$ resembles the graph of the Cantor function.  Inspired by this work, Armstrong and Schaubroeck~\cite{ArmstrongSchaubroeck:2020} explore random function iteration using finitely many linear functions. However, neither of the works \cite{ArmstrongSchaubroeck:2020, StankewitzRolf:2012} provides a proof that the classical Cantor function is indeed obtained in this manner.

In this paper we use the Bernoulli measure on the space of binary sequences to prove that the random function iteration approach in \cite{StankewitzRolf:2012} indeed gives rise to the Cantor function. In fact, by considering a class of functions $f_0$ and $f_1$ that satisfy suitable monotonicity assumptions we obtain several singular functions that are well-known from the literature: Cantor-like functions, Lebesgue singular functions, and variants of the Minkowksi question mark function. The work presented in this paper is based on a Bachelor research project performed by the first author \cite{Mitrea:2025} under supervision of the second author.


\newpage

\section{Random function iteration}
\label{sec:iteration}

As a first step we define a suitable probability measure on the space of all binary sequences. In this way we define a function that assigns to each initial point the probability that the sequence of the iterates diverges to plus infinity. Throughout the paper we will use the notation $\mathbb{N} = \{1,2,3,\dots\}$ and $\mathbb{N}_0 = \{0,1,2,3,\dots\}$. 


\subsection{Binary sequences and probability measures}

We consider the set of all binary sequences given by $\Omega = \{0,1\}^\infty$. An element $\omega \in \Omega$ will be written as $\omega = (\omega_1,\omega_2,\omega_3,\dots)$. In addition, we denote the flipped sequence, where all $0$'s become $1$'s and vice versa, as
\[
\overline{\omega} = (1-\omega_1,1-\omega_2,1-\omega_3,\dots),
\]
and for any $A \subseteq \Omega$ we define
\[
\rec(A) = \{\overline{\omega} \,:\, \omega \in A\}.
\]

We briefly sketch how probability measures can be defined on $\Omega$. For any $n \in \mathbb{N}$ we define a so-called cylinder set of length $n$ by
\[
[\alpha_1,\dots,\alpha_n] = \{ \omega \in \Omega \,:\, \omega_i = \alpha_i \text{ for all } i=1,\dots,n \}.
\]
For a fixed number $0 < p < 1$ set $q = 1-p$ and define
\[
\prob_p([\alpha_1,\dots,\alpha_n]) = p^{n-\sum_{i=1}^n \alpha_i} q^{\sum_{i=1}^n \alpha_i}.
\]
From this definition it follows that for any $1 < k < n$ we have
\begin{equation}
\label{eq:cylinder_split}
\prob_p([\alpha_1,\dots,\alpha_n])
	= \prob_p([\alpha_1,\dots,\alpha_k])\prob_p([\alpha_{k+1},\dots,\alpha_n]).
\end{equation}
Then $\prob_p$ can be extended to a unique probability measure, known as the Bernoulli measure, on the $\sigma$-algebra generated by the collection of all cylinder sets; see, for example, \cite{Berger:2001, DajaniKalle:2021} for further details.

The one-sided Bernoulli shift is defined as
\[
\sigma : \Omega \to \Omega, \quad (\omega_1,\omega_2,\omega_3,\dots) \mapsto (\omega_2,\omega_3,\omega_4,\dots).
\]
A standard result from ergodic theory \cite{Berger:2001, DajaniKalle:2021} is that $\sigma$ preserves the measure $\prob_p$: for all measurable sets $A \subseteq \Omega$ we have
\[
\prob_p(\sigma^{-1}(A)) = \prob_p(A).
\]
The following additional properties will be useful in the remainder of the paper:

\begin{lemma}
\label{lemma:sigma1}
For any measurable set $A \subseteq \Omega$ we have:
\begin{enumerate}[(i)]
\item $\prob_p(\sigma^{-1}(A) \cap [0]) = p\prob_p(A)$; 
\item $\prob_p(\sigma^{-1}(A) \cap [1]) = q\prob_p(A)$;
\item $\prob_p(\rec(A)) = \prob_q(A)$.
\end{enumerate}
\end{lemma}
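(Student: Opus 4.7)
The plan is to verify each identity first on the generating $\pi$-system of cylinder sets, and then to invoke the uniqueness of the extension of a pre-measure to the generated $\sigma$-algebra (Carathéodory, or equivalently Dynkin's $\pi$-$\lambda$ theorem).

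For (i), I would start by observing that if $A = [\alpha_1,\dots,\alpha_n]$ is a cylinder, then $\sigma^{-1}(A) = \{\omega \in \Omega : \omega_{i+1} = \alpha_i \text{ for } i=1,\dots,n\}$, so
\[
\sigma^{-1}(A) \cap [0] = [0,\alpha_1,\dots,\alpha_n].
\]
Applying the splitting identity \eqref{eq:cylinder_split} with $k=1$ gives
\[
\prob_p([0,\alpha_1,\dots,\alpha_n]) = \prob_p([0])\,\prob_p([\alpha_1,\dots,\alpha_n]) = p\,\prob_p(A),
\]
since $\prob_p([0]) = p$. Thus (i) holds on the $\pi$-system of cylinders. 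Both sides of (i), viewed as functions of $A$, are finite measures on the generated $\sigma$-algebra (the left side because $B \mapsto \prob_p(\sigma^{-1}(B) \cap [0])$ is countably additive, the right side trivially). Two finite measures agreeing on a generating $\pi$-system that contains $\Omega$ must coincide, which yields (i) in full generality. Statement (ii) follows in exactly the same manner using $\prob_p([1]) = q$.

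For (iii), I would again start with a cylinder $A = [\alpha_1,\dots,\alpha_n]$. By definition $\rec(A) = [1-\alpha_1,\dots,1-\alpha_n]$, so setting $s = \sum_{i=1}^n \alpha_i$,
\[
\prob_p(\rec(A)) = p^{n-(n-s)}\,q^{n-s} = p^{s}\,q^{n-s}.
\]
On the other hand, $\prob_q(A) = q^{n-s}(1-q)^{s} = p^{s}\,q^{n-s}$, so the two expressions agree on every cylinder. Both $B \mapsto \prob_p(\rec(B))$ and $B \mapsto \prob_q(B)$ are probability measures on the $\sigma$-algebra generated by the cylinders (the former because $\rec$ is an involutive bijection of $\Omega$ that maps cylinders to cylinders, hence is measurable and sends measurable sets to measurable sets), so by the same uniqueness argument they agree on all measurable $A$.

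The only genuinely delicate point is the extension step: one must check that each right-hand side indeed defines a measure on $\Omega$ so that equality on the $\pi$-system of cylinders lifts to equality on the full $\sigma$-algebra. For (i) and (ii) this reduces to the measurability of $\sigma$ together with countable additivity of $\prob_p$; for (iii) it requires that $\rec$ be bi-measurable, which is immediate from $\rec^{-1}([\alpha_1,\dots,\alpha_n]) = [1-\alpha_1,\dots,1-\alpha_n]$. Everything else is bookkeeping with the explicit formula for $\prob_p$ on cylinders.
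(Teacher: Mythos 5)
Your proof is correct, but for parts (i) and (ii) it takes a genuinely different route from the paper. You verify the identity on cylinders and then treat both sides as finite measures in the variable $A$, concluding by the uniqueness theorem for measures agreeing on a generating $\pi$-system (Dynkin / Carath\'eodory uniqueness); you correctly flag the one point that needs checking, namely that $A \mapsto \prob_p(\sigma^{-1}(A)\cap[0])$ is countably additive and that $\rec$ is bi-measurable. The paper instead proves (i) by an approximation argument: it first extends the cylinder identity to finite unions of cylinders, then for an arbitrary measurable $A$ picks a finite union $U$ of cylinders with $\prob_p(A \symmdiff U)\leq\varepsilon$ and bounds $|\prob_p(\sigma^{-1}(A)\cap[0]) - p\prob_p(A)|$ by $(1+p)\varepsilon$ via symmetric-difference estimates. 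Your route is shorter and pushes the measure-theoretic work into a single standard citation, while the paper's route is more hands-on and only uses the approximation property of the extension (which, like the uniqueness theorem, comes from the Carath\'eodory construction), so neither is more elementary in any essential way. For (iii) your argument coincides with the paper's. Two small remarks: the paper states the splitting identity \eqref{eq:cylinder_split} only for $1<k<n$, but the $k=1$ case you invoke is immediate from the defining formula for $\prob_p$ on cylinders, so nothing is lost; and since $\Omega$ is not literally a cylinder, you should justify equality of total masses directly (both sides equal $p$, resp.\ $q$, resp.\ $1$), which is trivial but worth a line before applying the uniqueness theorem.
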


\begin{proof}
(i) For any cylinder set we have
\[
\begin{split}
\sigma^{-1}([\alpha_1,\dots,\alpha_n]) \cap [0]
	& = ([0,\alpha_1,\dots,\alpha_n] \cup [1,\alpha_1,\dots,\alpha_n]) \cap [0] \\
	& = [0,\alpha_1,\dots,\alpha_n],
\end{split}
\]
so the observation in equation \eqref{eq:cylinder_split} gives
\[
\prob_p(\sigma^{-1}([\alpha_1,\dots,\alpha_n]) \cap [0]) = p\prob_p([\alpha_1,\dots,\alpha_n]).
\]
It easily follows that the equality also holds for any finite union of distinct cylinder sets.

Let $\symmdiff$ denote the symmetric difference of two sets:
\[
A \symmdiff B = (A \setminus B) \cup (B \setminus A) = (A \cup B) \setminus (A \cap B).
\]
For all measurable sets $A, B \subseteq \Omega$ we have
\[
\begin{split}
|\prob_p(A) - \prob_p(B)|
	& = |(\prob_p(A \setminus B) + \prob_p(A \cap B)) - (\prob_p(B \setminus A) + \prob_p(B \cap A))| \\
	& = |\prob_p(A \setminus B) - \prob_p(B \setminus A)| \\
	& \leq \prob_p(A \setminus B) + \prob_p(B \setminus A) \\
	& = \prob_p(A \symmdiff B),
\end{split}
\]
and
\[
\begin{split}
|\prob_p(\sigma^{-1}(A) \cap [0]) - \prob_p(\sigma^{-1}(B) \cap [0])|
	& \leq \prob_p((\sigma^{-1}(A) \cap [0]) \symmdiff (\sigma^{-1}(B) \cap [0])) \\
	& = \prob_p((\sigma^{-1}(A) \symmdiff \sigma^{-1}(B)) \cap [0]) \\
	& = \prob_p(\sigma^{-1}(A \symmdiff B) \cap [0]) \\
	& \leq \prob_p(\sigma^{-1}(A \symmdiff B)) \\
	& = \prob_p(A \symmdiff B).
\end{split}
\]
For any $\varepsilon>0$ there exists a set $U \subseteq \Omega$ such that $U$ is a finite union of distinct cylinder sets and
\[
\prob_p(A \symmdiff U) \leq \varepsilon.
\]
Since $\prob_p(\sigma^{-1}(U) \cap [0]) = p\prob_p(U)$ we have
\[
\begin{split}
|\prob_p(\sigma^{-1}(A) \cap [0]) - p\prob_p(A)|
	& \leq |\prob_p(\sigma^{-1}(A) \cap [0]) - \prob_p(\sigma^{-1}(U) \cap [0])| \\
	& \hspace{10mm} + p|\prob_p(U) - \prob_p(A)| \\
	& \leq (1+p) \prob_p(A \symmdiff U) \\
	& \leq (1+p)\varepsilon.
\end{split}
\]
Since $\varepsilon>0$ is arbitrary, we conclude that statement (i) holds for any measurable set $A$. Statement (ii) follows in the same manner.

(iii) This follows from the fact that the maps $C \mapsto \prob_p(\rec(C))$ and $C \mapsto \prob_q(C)$, where $C$ is a cylinder set, are the same and extend to a unique probability measure on $\Omega$.
\end{proof}


\subsection{Divergent sequences}

\begin{figure}[ht]
\centering
\includegraphics[width=0.45\textwidth]{./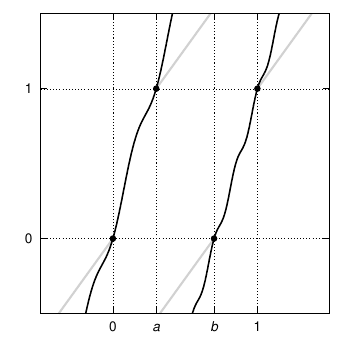}
\caption{Illustration of conditions (A1)--(A3). The gray lines have slope $s>1$.}
\label{fig:plot_f0f1}
\end{figure}

Now that that we have defined probability measures on the space of binary sequences we can study random function iteration. We consider functions $f_0, f_1 : \mathbb{R} \to \mathbb{R}$ that satisfy the following monotonicity assumptions:
\begin{itemize}
\item[(A1)] The functions $f_0$ and $f_1$ are continuous and strictly increasing.
\item[(A2)] There exist real numbers $0 < a \leq b < 1$ such that
\[
f_0(0) = 0, \quad
f_0(a) = 1, \quad
f_1(b) = 0, \quad
f_1(1) = 1.
\]
\item[(A3)] There exists a real number $s > 1$ such that the following implications hold:
\[
\begin{split}
x \leq 0 & \quad\Rightarrow\quad f_0(x) \leq s x, \\
x \geq a & \quad\Rightarrow\quad f_0(x) \geq 1+s(x-a), \\
x \leq b & \quad\Rightarrow\quad f_1(x) \leq s(x-b), \\ 
x \geq 1 & \quad\Rightarrow\quad f_1(x) \geq 1+s(x-1).
\end{split}
\]
See Figure \ref{fig:plot_f0f1} for an illustration. These inequalities ensure that iterates landing outside the unit interval will diverge to either plus or minus infinity.
\end{itemize}
Throughout the paper (A1)--(A3) will be assumed to hold without further mentioning. In some cases we need the following additional symmetry assumption:
\begin{itemize}
\item[(A4)] For all $x \in \mathbb{R}$ we have
\[
f_0(1-x) + f_1(x) = 1.
\]
Note that by replacing $x$ by $1-x$ we obtain the same identity in which the roles of $f_0$ and $f_1$ are interchanged. Moreover, we necessarily have $a=1-b$. Indeed, setting $x=b$ gives $f_0(1-b)=1$, but by assumption (A2) we have $f_0(a)=1$ as well. Since $f_0$ is injective by (A1) we have $a=1-b$.
\end{itemize}
Whenever (A4) is needed this will be explicitly stated.

For an initial point $x_0 \in \mathbb{R}$ and a symbol sequence $\omega \in \Omega$ we define the following sequence:
\begin{equation}
\label{eq:def_sequence}
x_n = \bigcomp_{i=1}^n f_{\omega_i}(x_0) = (f_{\omega_n} \circ \dots \circ f_{\omega_2} \circ f_{\omega_1})(x_0), \quad n \in \mathbb{N}.
\end{equation}
When the role of $\omega$ does not have to be made explicit we will simply denote this sequence by $(x_n)$.

First, we collect some useful observations on the behaviour of the sequence $(x_n)$ for a fixed choice of both the initial point and the symbol sequence.

\begin{lemma}
\label{lemma:properties1}
For any fixed $x_0 \in \mathbb{R}$ and $\omega \in \Omega$ the sequence $(x_n)$ defined in equation~\eqref{eq:def_sequence} satisfies the following properties:
\begin{enumerate}[(i)]
\item If $x_k < 0$ for some $k \in \mathbb{N}_0$, then $x_n \to -\infty$ as $n \to \infty$;
\item If $x_k > 1$ for some $k \in \mathbb{N}_0$, then $x_n \to \infty$ as $n \to \infty$;
\item Precisely one of the following three cases holds:
\begin{itemize}
\item $x_n \to -\infty$ as $n \to \infty$;
\item $x_n \in [0,1]$ for all $n \in \mathbb{N}_0$;
\item $x_n \to +\infty$ as $n \to \infty$.
\end{itemize}
\end{enumerate}
\end{lemma}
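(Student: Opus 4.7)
The plan is to exploit the slope-$s$ bounds in (A3) to show that once the orbit exits $[0,1]$ through either endpoint it moves away from that endpoint by a geometric factor of at least $s$ at every subsequent step, and then to derive (iii) as a trichotomy.

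For (i), I would first verify that $\{x < 0\}$ is forward-invariant under both $f_0$ and $f_1$: for $f_0$, monotonicity (A1) and $f_0(0)=0$ give $f_0(x) < 0$ whenever $x < 0$; for $f_1$, the fact that $x<0\leq b$ and $f_1(b)=0$ gives $f_1(x) < 0$. Then, using (A3), for any $x_k < 0$ the bounds $f_0(x_k) \leq s x_k$ (valid because $x_k < 0$) and $f_1(x_k) \leq s(x_k - b) \leq s x_k$ (valid because $x_k \leq 0 < b$ and $sb > 0$) together give $x_{k+1} \leq s x_k$. Since $x_k < 0$ and $s > 1$, iterating yields $x_{k+n} \leq s^n x_k$, so $x_n \to -\infty$.

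For (ii), the argument is symmetric, working with the shifted quantity $y_n = x_n - 1$. I would first show $\{x > 1\}$ is forward-invariant: $f_1(1)=1$ and monotonicity handle $f_1$, while for $f_0$ the bound $f_0(x_k) \geq 1 + s(x_k - a)$ applies because $x_k > 1 \geq a$, and the right-hand side exceeds $1$. Then I would use $f_0(x_k) - 1 \geq s(x_k - a) = s(x_k - 1) + s(1-a) \geq s(x_k - 1)$ and $f_1(x_k) - 1 \geq s(x_k - 1)$ to conclude $x_{k+1} - 1 \geq s(x_k - 1)$ in both cases, so $x_{k+n} - 1 \geq s^n(x_k - 1) \to \infty$.

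For (iii), the three cases are pairwise incompatible (a divergent sequence cannot also be bounded, and the two divergent cases have opposite limits), so I only need to show at least one of them occurs. If $x_n \in [0,1]$ for every $n$, the middle case holds; otherwise there exists a smallest $k$ with $x_k \notin [0,1]$, and (i) or (ii) applies to give divergence to $-\infty$ or $+\infty$ respectively.

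I do not anticipate a real obstacle; the only subtlety worth flagging explicitly is that in (i) and (ii) one must invoke the (A3) inequalities for the function whose "equilibrium" endpoint is on the \emph{far} side, namely $f_1$ on $\{x<0\}$ via the hypothesis $x \leq b$ (not $x \leq 0$) and $f_0$ on $\{x>1\}$ via $x \geq a$ (not $x \geq 1$), which is why $0<a\leq b<1$ is used rather than $a = 0$, $b = 1$.
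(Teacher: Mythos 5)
Your proposal is correct and follows essentially the same route as the paper: the geometric escape estimates $x_{k+n} \leq s^n x_k$ and $x_{k+n}-1 \geq s^n(x_k-1)$ obtained from (A3) by induction, with the trichotomy in (iii) deduced from (i) and (ii). The only cosmetic differences are that you state the forward invariance of $\{x<0\}$ and $\{x>1\}$ separately (the paper absorbs it into the induction hypothesis) and that you shift by $1$ in part (ii).
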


\begin{proof}
(i) It suffices to show that $x_n \leq s^{n-k} x_k$ for all $n \geq k$. Clearly, this inequality holds for $n=k$. Now assume that the inequality holds for some $n \geq k$. Noting that $x_n < 0$ and using assumption (A3) gives
\[
f_0(x_n)
	\leq sx_n 
	\leq s^{n+1-k} x_k,
\]
and
\[
f_1(x_n)
	\leq s(x_n-b)
	< sx_n
	\leq s^{n+1-k}x_k.
\]
Since either $x_{n+1} = f_0(x_n)$ or $x_{n+1} = f_1(x_n)$, the desired inequality follows by induction.

(ii) It suffices to show that $x_n \geq 1 + s^{n-k}(x_k-1)$ for all $n \geq k$. Clearly, this inequality holds for $n=k$. Now assume that the inequality holds for some $n \geq k$. Noting that $x_n > 1$ and using assumption (A3) gives
\[
\begin{split}
f_0(x_n)
	& \geq 1 + s(x_n-a) \\
	& = sx_n + 1-sa \\
	& \geq s(1 + s^{n-k}(x_k-1)) + 1-sa \\
	& = 1 + s^{n+1-k}(x_k-1) + (1-a)s \\
	& > 1 + s^{n+1-k}(x_k-1),
\end{split}
\]
and
\[
\begin{split}
f_1(x_n)
	& \geq 1 + s(x_n-1) \\
	& = sx_n + 1-s \\
	& \geq s(1 + s^{n-k}(x_k-1)) + 1-s \\
	& = 1+s^{n+1-k}(x_k-1).
\end{split}
\]
Since either $x_{n+1} = f_0(x_n)$ or $x_{n+1} = f_1(x_n)$, the desired inequality follows by induction.

(iii) This follows from (i) and (ii).
\end{proof}

\begin{lemma}
\label{lemma:properties2}
For any fixed $x_0, y_0 \in \mathbb{R}$ and $\omega \in \Omega$ the sequences $(x_n)$ and $(y_n)$ defined in equation~\eqref{eq:def_sequence} satisfy the following properties:
\begin{enumerate}[(i)]
\item If $x_0 \leq y_0$, then $x_n \leq y_n$ for all $n \in \mathbb{N}_0$;
\item Assume that (A4) holds. If $y_0 = 1-x_0$ but in the definition of $(y_n)$ the symbol sequence $\omega$ is replaced by the flipped sequence $\overline{\omega}$, then $y_n = 1-x_n$ for all $n \in \mathbb{N}$. In particular, $x_n \to \pm\infty$ if and only if $y_n \to \mp \infty$.
\end{enumerate}
\end{lemma}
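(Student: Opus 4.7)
The plan is to prove both parts by induction on $n$, using assumption (A1) for part (i) and the symmetry identity (A4) together with a case split on $\omega_{n+1}$ for part (ii).

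For part (i), I would start with the base case $n=0$, which holds by hypothesis. For the inductive step, assume $x_n \leq y_n$. Since $x_{n+1} = f_{\omega_{n+1}}(x_n)$ and $y_{n+1} = f_{\omega_{n+1}}(y_n)$ and both $f_0$ and $f_1$ are strictly increasing by (A1), the inequality is preserved and $x_{n+1} \leq y_{n+1}$. This is essentially immediate and presents no obstacle.

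For part (ii), the induction base $n=0$ again holds by hypothesis, $y_0 = 1 - x_0$. For the inductive step, suppose $y_n = 1 - x_n$. By the definition of the sequence $(y_n)$ with the flipped symbol sequence, $y_{n+1} = f_{1-\omega_{n+1}}(1-x_n)$, and I want to show this equals $1 - x_{n+1} = 1 - f_{\omega_{n+1}}(x_n)$. Split into two cases. If $\omega_{n+1}=0$, then $y_{n+1} = f_1(1-x_n)$; applying (A4) with the argument $1-x_n$ in the form $f_0(x_n) + f_1(1-x_n) = 1$ (this is (A4) after replacing $x$ with $1-x_n$) gives $f_1(1-x_n) = 1 - f_0(x_n) = 1 - x_{n+1}$. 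If $\omega_{n+1}=1$, then $y_{n+1} = f_0(1-x_n)$; applying (A4) directly at $x = x_n$ gives $f_0(1-x_n) = 1 - f_1(x_n) = 1 - x_{n+1}$. Either way the claim follows.

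The final assertion about divergence then follows directly from the identity $y_n = 1 - x_n$: if $x_n \to +\infty$ then $y_n = 1 - x_n \to -\infty$, and conversely; the same for the other direction.

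There is no real obstacle here; the only subtlety is being careful in part (ii) to apply (A4) with the correct substitution of the argument in each of the two cases, and to remember that (A4) is symmetric in the roles of $f_0$ and $f_1$ (as noted in the statement of (A4)), so both cases reduce to a single functional identity.
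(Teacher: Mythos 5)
Your proposal is correct and follows essentially the same inductive argument as the paper: monotonicity of $f_0,f_1$ for part (i), and the identity $f_{1-\omega_{n+1}}(1-x_n)=1-f_{\omega_{n+1}}(x_n)$ from (A4) for part (ii), with your two-case split simply spelling out the single application of (A4) that the paper writes in one line. No gaps.
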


\begin{proof}
(i) If $x_n \leq y_n$ for some $n \in \mathbb{N}$, then using that the functions $f_0$ and $f_1$ are both increasing gives
\[
x_{n+1} = f_{\omega_n}(x_n) \leq f_{\omega_n}(y_n) = y_{n+1}.
\]
Thus, the claim follows by induction.

(ii) If $y_n = 1-x_n$ for some $n \in \mathbb{N}$, then assumption (A4) implies that
\[
\begin{split}
y_{n+1}
	& = f_{1-\omega_n}(y_n) \\
	& = f_{1-\omega_n}(1-x_n) \\
	& = 1-f_{\omega_n}(x_n) \\
	& = 1-x_{n+1}.
\end{split}
\]
Thus, the claim follows by induction.
\end{proof}

Next, we will consider the sequence $(x_n)$ where the symbol sequence $\omega$ is chosen at random according to the probability measure $\prob_p$. To that end, we need the following result on measurable sets.

\begin{lemma}
\label{lemma:measurable}
For any fixed $x_0 \in \mathbb{R}$ the following sets are measurable:
\begin{enumerate}[(i)]
\item $\{\omega \in \Omega \,:\, x_n \to \infty\}$;
\item $\{\omega \in \Omega \,:\, x_n \to -\infty\}$;
\item $\{\omega \in \Omega \,:\, (x_n) \subseteq [0,1] \}$.
\end{enumerate}

\end{lemma}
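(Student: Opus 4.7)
The plan is to reduce each of the three sets to a countable union or intersection of sets that depend on only finitely many coordinates of $\omega$, and then exhibit those finite-coordinate sets as finite unions of cylinder sets.

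First, I would invoke Lemma~\ref{lemma:properties1} to rewrite the three sets in a more tractable form. By parts (i) and (ii), $x_n \to +\infty$ is equivalent to the existence of some $k \in \mathbb{N}_0$ with $x_k > 1$, and $x_n \to -\infty$ is equivalent to the existence of some $k \in \mathbb{N}_0$ with $x_k < 0$. Part (iii) gives that the sequence stays in $[0,1]$ iff $x_k \in [0,1]$ for every $k$. Hence
\[
\{\omega : x_n \to +\infty\} = \bigcup_{k=0}^\infty \{\omega : x_k > 1\}, \quad
\{\omega : x_n \to -\infty\} = \bigcup_{k=0}^\infty \{\omega : x_k < 0\},
\]
\[
\{\omega : (x_n) \subseteq [0,1]\} = \bigcap_{k=0}^\infty \{\omega : x_k \in [0,1]\}.
\]
So it suffices to show that for each fixed $k$ and each Borel set $A \subseteq \mathbb{R}$, the set $\{\omega : x_k \in A\}$ is measurable.

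Next I would observe that $x_k$ depends only on $\omega_1, \dots, \omega_k$: by definition $x_k = (f_{\omega_k} \circ \cdots \circ f_{\omega_1})(x_0)$, so for each finite string $(\alpha_1,\dots,\alpha_k) \in \{0,1\}^k$ the value of $x_k$ is constant on the cylinder $[\alpha_1,\dots,\alpha_k]$. Therefore
\[
\{\omega : x_k \in A\} = \bigcup_{(\alpha_1,\dots,\alpha_k) \in S_k(A)} [\alpha_1,\dots,\alpha_k],
\]
where $S_k(A) = \{(\alpha_1,\dots,\alpha_k) \in \{0,1\}^k : (f_{\alpha_k} \circ \cdots \circ f_{\alpha_1})(x_0) \in A\}$. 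For $k=0$ one interprets this as either the full space $\Omega$ or the empty set depending on whether $x_0 \in A$. Since $S_k(A)$ is a finite set, this is a finite (hence measurable) union of cylinder sets.

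Finally, applying this to $A = (1,\infty)$, $A = (-\infty,0)$, and $A = [0,1]$ shows that each of $\{\omega : x_k > 1\}$, $\{\omega : x_k < 0\}$, and $\{\omega : x_k \in [0,1]\}$ is measurable. Taking the countable unions and intersection above yields measurability of the three sets in the statement. There is no real obstacle here; the argument is essentially bookkeeping, with the only subtle point being to correctly identify each $\{\omega : x_k \in A\}$ as a finite union of cylinders via the functional expression for $x_k$.
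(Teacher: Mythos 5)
Your proof is correct, but it takes a slightly different route from the paper's. For parts (i) and (ii) you invoke Lemma~\ref{lemma:properties1}(i)--(ii) to replace the divergence events by the simpler countable unions $\bigcup_{k}\{\omega : x_k > 1\}$ and $\bigcup_{k}\{\omega : x_k < 0\}$; the paper instead works directly from the definition of divergence, writing $\{\omega : x_n \to \infty\} = \bigcap_{t=1}^\infty \bigcup_{m=1}^\infty \bigcap_{k=m}^\infty \{\omega : x_k > t\}$, which needs no dynamical input beyond the fact that each event $\{x_k > t\}$ depends only on $\omega_1,\dots,\omega_k$ (and in particular would remain valid without assumption (A3), on which Lemma~\ref{lemma:properties1} relies). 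Conversely, for part (iii) the paper uses Lemma~\ref{lemma:properties1}(iii) to express the stay-in-$[0,1]$ event as the intersection of the complements of the two divergence events, whereas you handle it definitionally as $\bigcap_k \{\omega : x_k \in [0,1]\}$, which is simpler. The common core of both arguments is the same and you state it correctly: for fixed $k$, the event $\{\omega : x_k \in A\}$ is a finite union of cylinder sets (or $\Omega$/$\emptyset$ when $k=0$), hence measurable, and the target sets are countable Boolean combinations of such events. Within the paper's standing assumptions (A1)--(A3) your use of Lemma~\ref{lemma:properties1} is legitimate, so there is no gap; your version buys a shorter decomposition at the cost of importing the monotonicity/expansion lemma into a statement the paper proves by pure bookkeeping.
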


\begin{proof}
(i) For fixed $k, t \in \mathbb{N}$ the set
\[
A_{k,t} = \{ \omega \in \Omega \,:\, (x_n) \text{ satisfies } x_k > t \}
\]
is either empty or the union of at most $2^k$ cylinder sets and hence measurable. Since $\sigma$-algebras are closed under countable unions and intersections it follows that
\[
\{\omega \in \Omega \,:\, x_n \to \infty\}
	= \bigcap_{t = 1}^\infty \bigcup_{m=1}^\infty \bigcap_{k=m}^\infty A_{k,t}
\]
is measurable. Statement (ii) is proven similarly.

(iii) Lemma \ref{lemma:properties1}(iii) implies that
\[
\{\omega \in \Omega \,:\, (x_n) \subseteq [0,1] \}
	= \{\omega \in \Omega \,:\, x_n \to \infty\}^c \cap \{\omega \in \Omega \,:\, x_n \to -\infty\}^c
\]
Since $\sigma$-algebras are closed under complements it follows that the right-hand side is measurable.
\end{proof}

\begin{lemma}
\label{lemma:properties3}
For any fixed $x_0 \in \mathbb{R}$ the sequence $(x_n)$ satisfies the following properties:
\begin{enumerate}[(i)]
\item If $x_0 \leq 0$, then $\prob_p(\{\omega \in \Omega \,:\, x_n \to \infty\}) = 0$;
\item If $x_0 \geq 1$, then $\prob_p(\{\omega \in \Omega \,:\, x_n \to \infty\}) = 1$;
\item $\prob_p(\{\omega \in \Omega \,:\, (x_n) \subseteq [0,1]\}) = 0$;
\item $\prob_p(\{\omega \in \Omega \,:\, (x_n) \to -\infty\}) = 1-\prob_p(\{\omega \in \Omega \,:\, (x_n) \to \infty\})$.
\end{enumerate}
\end{lemma}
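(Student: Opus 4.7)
The proof splits naturally into the four parts, with (iii) being the substantive one. For (i) with $x_0 < 0$, Lemma \ref{lemma:properties1}(i) (applied with $k = 0$) shows that $x_n \to -\infty$ for every $\omega$, so the set in question is empty. The boundary case $x_0 = 0$ requires a short argument: because $f_1$ is strictly increasing with $f_1(b) = 0$ and $b > 0$, any $\omega$ with $\omega_1 = 1$ produces $x_1 < 0$, and then $x_n \to -\infty$ by Lemma \ref{lemma:properties1}(i). Hence the event $A := \{\omega : x_n \to \infty\}$ sits inside $[0]$, and since $f_0(0) = 0$ the starting point returns to $0$ after applying $f_0$, which gives the self-similarity $A \cap [0] = \sigma^{-1}(A) \cap [0]$. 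Lemma \ref{lemma:sigma1}(i) then yields $\prob_p(A) = p\prob_p(A)$, forcing $\prob_p(A) = 0$. Part (ii) is the mirror argument at $x_0 = 1$ using Lemma \ref{lemma:sigma1}(ii): for $x_0 > 1$ the set is all of $\Omega$, while for $x_0 = 1$ the relation $\prob_p(A) = p + q\prob_p(A)$ forces $\prob_p(A) = 1$.

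For the main part (iii), I would introduce the function
\[
g(x_0) := \prob_p\bigl(\{\omega \in \Omega : (x_n) \subseteq [0,1]\}\bigr),
\]
which is well-defined by Lemma \ref{lemma:measurable}. For $x_0 \notin [0,1]$ the trichotomy in Lemma \ref{lemma:properties1}(iii) immediately gives $g(x_0) = 0$. For $x_0 \in [0,1]$, conditioning on $\omega_1$ and applying Lemma \ref{lemma:sigma1}(i)--(ii) to the shifted sequence yields
\[
g(x_0) = p\,g(f_0(x_0))\,\chi_{[0,1]}(f_0(x_0)) + q\,g(f_1(x_0))\,\chi_{[0,1]}(f_1(x_0)),
\]
where $\chi$ denotes the characteristic function. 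Now $f_0(x_0) \in [0,1]$ iff $x_0 \in [0,a]$, and $f_1(x_0) \in [0,1]$ iff $x_0 \in [b,1]$; since $a \leq b$, at most one of the two indicators is nonzero, with the single exception of the degenerate case $a = b$ and $x_0 = a$. Setting $M := \sup_{x \in [0,1]} g(x) \leq 1$, the relation yields $g(x_0) \leq \max(p,q)\,M$ for every $x_0 \in [0,1]$; taking the supremum gives $M \leq \max(p,q)\,M$, and since $0 < p < 1$ forces $\max(p,q) < 1$, we conclude $M = 0$.

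The only minor obstacle in (iii) is the degenerate case $a = b$ at $x_0 = a$, where both indicators equal $1$ simultaneously so that $g(a) = p\,g(1) + q\,g(0)$, potentially violating the bound $g(a) \leq \max(p,q)\,M$. I would dispose of it by first showing $g(0) = g(1) = 0$ directly: the functional relation at $x_0 = 0$ collapses to $g(0) = p\,g(0)$ because $f_1(0) < 0$, and symmetrically $g(1) = q\,g(1)$. Finally, part (iv) is immediate from (iii) combined with the trichotomy in Lemma \ref{lemma:properties1}(iii): the three events $\{x_n \to -\infty\}$, $\{(x_n)\subseteq[0,1]\}$, $\{x_n \to \infty\}$ are measurable by Lemma \ref{lemma:measurable} and partition $\Omega$, so their probabilities sum to $1$, and the vanishing of the middle one forces the other two to be complementary.
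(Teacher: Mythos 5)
Your proposal is correct, but for the substantive part (iii) it takes a genuinely different route from the paper. The paper works with the finite-time survival events $B_n=\{\omega : x_k\in[0,1] \text{ for } k=1,\dots,n\}$, decomposes each $B_n$ into cylinders, observes that from each cylinder only one symbol can keep the orbit in $[0,1]$ (since $f_1<0$ on $[0,a]$ and $f_0>1$ on $(a,1]$), and deduces the geometric bound $\prob_p(B_{n+1})\leq\max\{p,q\}\,\prob_p(B_n)$, hence $\prob_p(\bigcap_n B_n)=0$ by continuity of the measure. You instead work directly with the limiting probability $g(x_0)$, derive the self-similarity relation $g(x_0)=p\,g(f_0(x_0))\chi_{[0,1]}(f_0(x_0))+q\,g(f_1(x_0))\chi_{[0,1]}(f_1(x_0))$ via Lemma \ref{lemma:sigma1}, and close the argument with a supremum/contraction step $M\leq\max\{p,q\}M$. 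Your version is shorter and avoids the cylinder bookkeeping, at the price of the degenerate case $a=b$, $x_0=a$, where both indicators are active; you correctly spot this and dispose of it by first showing $g(0)=g(1)=0$ (in fact the paper's own claim that $f_1<0$ on all of $[0,a]$ is also slightly loose precisely when $a=b$, so your explicit handling of that point is a small bonus). The paper's approach, on the other hand, yields the quantitative decay rate $\prob_p(B_n)\leq\max\{p,q\}^{\,n-1}$, which your limit-level argument does not produce. Your treatment of the boundary cases in (i) and (ii) also differs: the paper identifies the exceptional sets explicitly (empty set for $x_0=0$; the single all-ones sequence for $x_0=1$), whereas you use the self-similarity relations $\prob_p(A)=p\,\prob_p(A)$ and $\prob_p(A)=p+q\,\prob_p(A)$; both are valid, and part (iv) is handled identically in the two arguments.
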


\begin{proof}
(i) For $x_0 < 0$ this immediately follows from Lemma \ref{lemma:properties1}(i). If $x_0 = 0$, then $x_n = 0$ for all $n \in \mathbb{N}$ if and only if $\omega_n = 0$ for all $n \in \mathbb{N}$; otherwise $x_n < 0$ for some $n \in \mathbb{N}$ and thus $x_n \to -\infty$. In either case, $x_n \to \infty$ cannot occur. 

(ii) For $x_0 > 1$ this immediately follows from Lemma \ref{lemma:properties1}(ii). If $x_1 = 1$, then with the cylinder set $A_n = [1,1,\dots,1]$ of length $n$ we have
\[
\{\omega \in \Omega \,:\, x_n \to \infty\}
	= \{\omega \in \Omega \,:\, \omega_n = 0 \text{ for some } n \in \mathbb{N}\}
	= \Omega \setminus \bigcap_{n=1}^\infty A_n.
\]
Since $\prob_p(A_n) = q^n$ for all $n \in \mathbb{N}$ it follows that the set on the left-hand side has full measure.

(iii) If $x_0 \notin [0,1]$, then Lemma \ref{lemma:properties1} implies that for all choices of $\omega \in \Omega$ we have that $x_n \notin [0,1]$ for all $n \in \mathbb{N}$ and thus $\prob_p(\{\omega \in \Omega \,:\, (x_n) \subseteq [0,1]\}) = 0$ trivially holds. If $x_0 \in [0,1]$, then for all $n \geq 1$ define the set
\[
B_n = \{\omega \in \Omega \,:\, (x_n) \text{ satisfies } x_k \in [0,1] \text{ for all } k=1,\dots,n\}.
\]
Note that $B_n$ can be written as a disjoint union of finitely many cylinder sets: setting
\[
I_n = \{(\omega_1,\dots,\omega_n) \in \{0,1\}^n \,:\, [\omega_1,\dots,\omega_n] \subseteq B_n\}
\]
gives
\[
B_n = \bigcup_{(\omega_1,\dots,\omega_n) \in I_n} [\omega_1,\dots,\omega_n].
\]
Note that $f_1(x_n)<0$ when $x_n \in [0,a]$ and $f_0(x_n)>1$ when $x_n \in (a,1]$. Therefore, setting
\[
\alpha_{n+1} = \begin{cases} 1 & \text{if } 0 \leq x_n \leq a, \\ 0 & \text{if } a < x_n \leq 1, \end{cases}
\]
gives the inclusion
\[
B_{n+1}
	\subseteq B_n \cap \{\omega \in \Omega \,:\, \omega_{n+1} = \alpha_{n+1}\}
	= \bigcup_{(\omega_1,\dots,\omega_n) \in I_n} [\omega_1,\dots,\omega_n, \alpha_{n+1}].
\]
By equation \eqref{eq:cylinder_split} it follows that
\[
\prob_p(B_{n+1})
	\leq \sum_{(\omega_1,\dots,\omega_n) \in I_n} \prob_p([\omega_1,\dots,\omega_n, \alpha_{n+1}])
	= \sum_{(\omega_1,\dots,\omega_n) \in I_n} \prob_p([\omega_1,\dots,\omega_n])\prob_p([\alpha_{n+1}]).
\]
Letting $\mu := \max\{p,q\}$ gives
\[
\prob_p(B_{n+1}) \leq \mu \sum_{(\omega_1,\dots,\omega_n) \in I_n} \prob_p([\omega_1,\dots,\omega_n]) = \mu\prob_p(B_n).
\]
By induction it follows that $\prob_p(B_n) \leq \mu^{n-1}\prob_p(B_1)$ for all $n \in \mathbb{N}$.
Finally, we obtain
\[
\prob_p(\{ \omega \in \Omega \,:\, (x_n) \subseteq [0,1]\})
	= \prob_p\bigg(\bigcap_{n=1}^\infty B_n\bigg)
	= \lim_{n\to\infty} \prob_p(B_n) = 0.
\]

(iv) Lemma \ref{lemma:properties1}(iii) implies that the following union is disjoint:
\[
\Omega =
	\{\omega \in \Omega \,:\, x_n \to -\infty\} \cup
	\{\omega \in \Omega \,:\, (x_n) \subseteq [0,1]\} \cup
	\{\omega \in \Omega \,:\, x_n \to \infty\}.
\]
Taking the measure of both sides and using part (iii) gives
\[
1 = \prob_p(\{\omega \in \Omega \,:\, x_n \to -\infty\}) + \prob_p(\{\omega \in \Omega \,:\, x_n \to \infty\}),
\]
as desired.
\end{proof}


\subsection{The function $F_p$}

We now define the following function:
\[
F_{p} : \mathbb{R} \to \mathbb{R}, \quad
F_{p}(x) = \prob_p(\{ \omega \in \Omega \,:\, \bigcomp_{i=1}^n f_{\omega_i}(x) \to \infty \}).
\]
Lemma \ref{lemma:measurable} ensures that this function is well defined. The dependence of $F_p$ on the choice of the functions $f_0$ and $f_1$ is suppressed in the notation for convenience.

\begin{theorem}
\label{theorem:cantor_properties}
The function $F_{p}$ has the following properties:
\begin{enumerate}[(i)]
\item $F_{p}(x) = 0$ for all $x \leq 0$ and $F_{p}(x) = 1$ for all $x \geq 1$;
\item $F_{p}(x) \leq F_p(y)$ whenever $x \leq y$;
\item $F_p(x) = pF_p(f_0(x)) + qF_p(f_1(x))$ for all $x \in \mathbb{R}$;
\item $F_p(x) = p$ for all $x \in [a,b]$;
\item $F_{p}(1-x) = 1-F_{q}(x)$ for all $x \in \mathbb{R}$ whenever assumption (A4) holds.
\end{enumerate}
\end{theorem}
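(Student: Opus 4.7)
Proof proposal. For brevity, write $A_\infty(x) = \{\omega \in \Omega : \bigcomp_{i=1}^n f_{\omega_i}(x) \to \infty\}$ and $A_{-\infty}(x)$ analogously, so $F_p(x) = \prob_p(A_\infty(x))$.

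Part (i) is immediate from Lemma \ref{lemma:properties3}(i)--(ii). For part (ii), I would invoke Lemma \ref{lemma:properties2}(i): if $x_0 \leq y_0$, then iterating with the \emph{same} symbol sequence $\omega$ preserves the inequality pointwise, so $x_n \to \infty$ forces $y_n \to \infty$. This yields $A_\infty(x) \subseteq A_\infty(y)$, hence the inequality on measures.

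The crux is part (iii). The strategy is to split $A_\infty(x)$ according to the value of $\omega_1$ and relate the two pieces to orbits of $f_0(x)$ and $f_1(x)$ respectively via the shift $\sigma$. More precisely, the orbit of $x$ under $\omega$ coincides, after one step, with the orbit of $f_{\omega_1}(x)$ under the shifted sequence $\sigma(\omega)$. Consequently
\[
A_\infty(x) \cap [0] = \sigma^{-1}(A_\infty(f_0(x))) \cap [0], \qquad
A_\infty(x) \cap [1] = \sigma^{-1}(A_\infty(f_1(x))) \cap [1].
\]
Since $[0]$ and $[1]$ partition $\Omega$, applying Lemma \ref{lemma:sigma1}(i)--(ii) to each piece gives
\[
F_p(x) = p\prob_p(A_\infty(f_0(x))) + q\prob_p(A_\infty(f_1(x))) = pF_p(f_0(x)) + qF_p(f_1(x)),
\]
which is the desired functional equation. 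I expect this to be the main obstacle: one has to be careful that the equivalence between $\omega \in A_\infty(x)$ and $\sigma(\omega) \in A_\infty(f_{\omega_1}(x))$ genuinely requires only dropping the first iterate (a trivial tail property of divergence), and then the measurability bookkeeping is supplied by Lemma \ref{lemma:measurable} so that Lemma \ref{lemma:sigma1} can be applied.

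Part (iv) then drops out of (iii) combined with (i): for $x \in [a,b]$ we have $f_0(x) \geq f_0(a) = 1$ and $f_1(x) \leq f_1(b) = 0$, so $F_p(f_0(x)) = 1$ and $F_p(f_1(x)) = 0$, leaving $F_p(x) = p \cdot 1 + q \cdot 0 = p$. Finally for part (v), I would exploit the conjugacy of Lemma \ref{lemma:properties2}(ii): iterating $x$ under $\omega$ and $1-x$ under $\overline{\omega}$ produces mirror-image sequences, so
\[
A_\infty(x) = \rec(A_{-\infty}(1-x)).
\]
Taking $\prob_p$-measure and using Lemma \ref{lemma:sigma1}(iii) converts this into $\prob_q(A_{-\infty}(1-x))$, which by Lemma \ref{lemma:properties3}(iv) equals $1 - F_q(1-x)$. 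This gives $F_p(x) = 1 - F_q(1-x)$; swapping the roles of $p$ and $q$ (equivalently replacing $x$ by $1-x$) produces $F_p(1-x) = 1 - F_q(x)$ as required.
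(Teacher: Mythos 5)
Your proposal is correct and follows essentially the same route as the paper: parts (i)--(iii) and (v) rely on the same lemmas and the same set identities (splitting $A_\infty(x)$ on $\omega_1$ via the shift for (iii), and the flip/conjugacy identity combined with Lemma \ref{lemma:sigma1}(iii) and Lemma \ref{lemma:properties3}(iv) for (v)). The only minor deviation is in (iv), where you apply the functional equation directly at each $x \in [a,b]$ using monotonicity of $f_0$ and $f_1$, whereas the paper evaluates $F_p$ at the endpoints $a$ and $b$ and then uses monotonicity of $F_p$; both arguments are valid.
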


\begin{proof}
(i) This follows directly from Lemma \ref{lemma:properties3}(i) and (ii).

(ii) If $x \leq y$, then Lemma \ref{lemma:properties2}(i) implies that
\[
\{ \omega \in \Omega \,:\, \bigcomp_{i=1}^n f_{\omega_i}(x) \to \infty \}
	\subseteq
	\{ \omega \in \Omega \,:\, \bigcomp_{i=1}^n f_{\omega_i}(y) \to \infty \},
\]
from which the claim follows.

(iii) Define the sets
\[
A = \{ \omega \in \Omega \,:\, \bigcomp_{i=1}^n f_{\omega_i}(x) \to \infty \}
\quad\text{and}\quad
B_k = \{ \omega \in \Omega \,:\, \bigcomp_{i=1}^n f_{\omega_i}(f_k(x)) \to \infty \},
\]
where $k \in \{0,1\}$. We claim that these sets are related as follows:
\[
A \cap [0] = \sigma^{-1}(B_0) \cap [0]
\quad\text{and}\quad
A \cap [1] = \sigma^{-1}(B_1) \cap [1].
\]
Indeed, we have the following equivalences:
\[
\begin{split}
(\omega_1,\omega_2,\omega_3,\dots) \in \sigma^{-1}(B_0) \cap [0]
	& \quad\Leftrightarrow\quad \omega_1 = 0 \text{ and } (\omega_2,\omega_3,\omega_4,\dots) \in B_0 \\
	& \quad\Leftrightarrow\quad \omega_1 = 0 \text{ and } \bigcomp_{i=2}^{n+1} f_{\omega_i}(f_0(x)) \to \infty \\
	& \quad\Leftrightarrow\quad \omega_1 = 0 \text{ and } \bigcomp_{i=1}^{n} f_{\omega_i}(x) \to \infty \\
	& \quad\Leftrightarrow\quad (\omega_1,\omega_2,\omega_3,\dots) \in A \cap [0].
\end{split}
\]
The other equality follows in a similar manner. Finally, using Lemma \ref{lemma:sigma1} gives
\[
\begin{split}
F_p(x)
	& = \prob_p(A) \\
	& = \prob_p(A \cap [0]) + \prob_p(A \cap [1]) \\
	& = \prob_p(\sigma^{-1}(B_0) \cap [0]) + \prob_p(\sigma^{-1}(B_1) \cap [1]) \\
	& = p\prob_p(B_0) + q\prob_p(B_1) \\
	& = pF_p(f_0(x)) + qF_p(f_1(x)),
\end{split}
\]
as desired.

(iv) Since
\[
f_0(a)=1, \quad
f_1(a)<0, \quad
f_0(b)>1, \quad
f_1(b)=0,
\]
it follows from statements (i) and (iii) that $F_p(a) = F_p(b) = p$. Statement (ii) implies that $F_p(x)=p$ for all $x \in [a,b]$.

(v) Under assumption (A4) we can apply Lemma \ref{lemma:properties2}(ii), which gives
\[
\begin{split}
F_{p}(1-x)
	& = \prob_p(\{ \omega \in \Omega \,:\, \bigcomp_{i=1}^n f_{\omega_i}(1-x) \to \infty \}) \\
	& = \prob_p(\{ \omega \in \Omega \,:\, \bigcomp_{i=1}^n f_{\overline{\omega}_i}(x) \to -\infty \}).
\end{split}
\]
Using Lemma \ref{lemma:properties3}(iv) gives
\[
F_{p}(1-x)
	= 1-\prob_p(\{ \omega \in \Omega \,:\, \bigcomp_{i=1}^n f_{\overline{\omega}_i}(x) \to \infty \}).
\]
Finally, using Lemma \ref{lemma:sigma1} gives
\[
\begin{split}
F_{p}(1-x)
	& = 1-\prob_p(\rec(\{ \omega \in \Omega \,:\, \bigcomp_{i=1}^n f_{\omega_i}(x) \to \infty \})) \\
	& = 1-\prob_q(\{ \omega \in \Omega \,:\, \bigcomp_{i=1}^n f_{\omega_i}(x) \to \infty \}) \\
	& = 1-F_{q}(x),
\end{split}
\]
as desired.
\end{proof}

For any $x \in [0,1]$ we have $f_0^{-1}(x) \in [0,a]$ and $f_1^{-1}(x) \in [b,1]$, which implies $f_1(f_0^{-1}(x)) < 0$ and $f_0(f_1^{-1}(x)) > 1$, and thus
\begin{equation}
\label{eq:Fp-constant}
F_{p}(f_0^{-1}(x)) = pF_{p}(x)
\quad\text{and}\quad
F_{p}(f_1^{-1}(x)) = p + qF_{p}(x).
\end{equation}
Consider the recursion
\begin{equation}
\label{eq:Cn-recursion}
C_{n+1} = f_0^{-1}(C_n) \cup f_1^{-1}(C_n)
\quad\text{where}\quad
C_0 = [a,b].
\end{equation}
Note that every set $C_n$ is a disjoint union of $2^n$ intervals and that in view of equation~\eqref{eq:Fp-constant} the function $F_p$ is constant on each of those intervals. In conclusion, for $a<b$ there exist infinitely many subintervals of $[0,1]$ on which the function $F_{p}$ is constant. The parameters $a$ and $b$ control the width of these intervals and the parameter $p$ controls the constant value that $F_p$ attains on these intervals.

For the purpose of plotting graphs one could take a sufficiently large $k \in \mathbb{N}$ and approximate the value of $F_p(x)$ by the sum of the probabilities $\prob_p ([\alpha_1,\dots,\alpha_k])$ over all $k$-tuples $(\alpha_1,\dots,\alpha_k) \in \{0,1\}^k$ for which $\bigcomp_{i=1}^k f_{\alpha_i}(x) > 1$. Alternatively, one can use the functional equation in Theorem \ref{theorem:cantor_properties}(iii) to derive a sequence of successive approximations as follows. It is well known that the following metric space is complete:
\[
\begin{split}
\mathcal{B} & = \{F : \mathbb{R} \to \mathbb{R} \,:\, F \text{ is bounded}\}, \\
d(F,G) & = \sup_{x \in \mathbb{R}} |F(x) - G(x)|.
\end{split}
\]
Further, observe that the following set is closed in $\mathcal{B}$:
\[
\mathcal{V} = \{ F \in \mathcal{B} \,:\, F|_{(-\infty,0]} \equiv 0 \text{ and } F|_{[1,\infty)} \equiv 1\}.
\]

\begin{lemma}
The map $T : \mathcal{B} \to \mathcal{B}$ defined by
\[
(TF)(x) = pF(f_0(x)) + qF(f_1(x)),
\]
satisfies the following properties:
\begin{enumerate}[(i)]
\item $T(\mathcal{V}) \subseteq \mathcal{V}$;
\item $d(TF, TG) \leq \max\{p,q\}\,d(F,G)$ for all $F, G \in \mathcal{V}$.
\end{enumerate}
\end{lemma}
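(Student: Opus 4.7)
My plan is to prove both parts by a direct case analysis on the position of $x$ relative to the four anchor points $0, a, b, 1$, exploiting the monotonicity of $f_0, f_1$ together with the boundary values from (A1)--(A2). The driving idea behind (ii) is that functions in $\mathcal{V}$ agree identically on $(-\infty,0]\cup[1,\infty)$, so the only contributions to $(TF)(x)-(TG)(x)$ come from terms whose inner argument actually lands in $(0,1)$; the monotonicity arrangements force at least one of $f_0(x), f_1(x)$ to escape this middle interval in each case.

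For (i), given $F \in \mathcal{V}$, I would first note that $|(TF)(x)| \leq p\|F\|_\infty + q\|F\|_\infty = \|F\|_\infty$, so $TF \in \mathcal{B}$. To verify the boundary values, I would use strict monotonicity with $f_0(0)=0$ and $f_1(b)=0$, $b>0$: for $x \leq 0$ this yields $f_0(x) \leq 0$ and $f_1(x) \leq f_1(0) < 0$, so $F(f_0(x)) = F(f_1(x)) = 0$ and therefore $(TF)(x)=0$. For $x \geq 1$, the same tools give $f_0(x) \geq f_0(1) > f_0(a) = 1$ and $f_1(x) \geq f_1(1) = 1$, hence $(TF)(x) = p\cdot 1 + q\cdot 1 = 1$.

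For (ii), I would partition $\mathbb{R}$ into the five regions $(-\infty,0]$, $(0,a)$, $[a,b]$, $(b,1)$, $[1,\infty)$. On the two outer regions the argument from (i) shows $TF = TG$ pointwise. On $[a,b]$, the inequality $a \leq b$ combined with $f_0(a)=1$ and $f_1(b)=0$ forces $f_0(x) \geq 1$ and $f_1(x) \leq 0$, so $F$ and $G$ agree on both arguments and the difference vanishes. On $(0,a)$, strict monotonicity with $a \leq b$ and $f_1(b)=0$ gives $f_1(x) < 0$, so the $q$-term drops out and $|(TF)(x)-(TG)(x)| = p\,|F(f_0(x)) - G(f_0(x))| \leq p\,d(F,G)$. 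By the symmetric argument on $(b,1)$, $f_0(x) > 1$ kills the $p$-term and the difference is bounded by $q\,d(F,G)$. Taking the supremum in $x$ yields the factor $\max\{p,q\}$.

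I do not expect any real obstacle here: the only thing that requires care is the bookkeeping across the five cases, making sure in each one that at least one of $f_0(x), f_1(x)$ lies in $(-\infty,0] \cup [1,\infty)$ so that the agreement of $F$ and $G$ on that set can be invoked. Once the relation $a \leq b$ is used consistently, this is immediate.
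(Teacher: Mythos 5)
Your proof is correct and follows essentially the same route as the paper: part (i) is the boundary-value check the paper leaves implicit under ``follows from (A1)--(A3)'', and part (ii) is the same observation that on each region at least one of $f_0(x), f_1(x)$ leaves $(0,1)$, which the paper compresses into a three-case display ($[0,a]$, $[b,1]$, otherwise) where you use five regions. No gaps; the extra detail (e.g.\ $f_1(x)<f_1(b)=0$ for $x<a\leq b$ and $f_0(x)>f_0(a)=1$ for $x>b\geq a$) is exactly the bookkeeping the paper omits.
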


\begin{proof}
(i) This follows from assumptions (A1)--(A3).

(ii) For all $x \in \mathbb{R}$ we have
\[
(TF)(x)-(TG)(x) = 
\begin{cases}
p[F(f_0(x))-G(f_0(x))]	& \text{if } x \in [0,a], \\
q[F(f_1(x))-G(f_1(x))]	& \text{if } x \in [b,1], \\
0									& \text{otherwise},
\end{cases}
\]
from which the claim readily follows.
\end{proof}

The Contraction Mapping Theorem implies that $T$ has a unique fixed point in $\mathcal{V}$; in fact, by Theorem \ref{theorem:cantor_properties} it follows that this fixed point is the function $F_p$. Now consider the following iteration scheme:
\[
F_{p,k} = T^k F_{p,0}
\quad\text{where}\quad
F_{p,0}(x) = 
\begin{cases}
0 & \text{if } x \leq 0, \\
x & \text{if } 0 < x < 1, \\
1 & \text{if } x \geq 1.
\end{cases}
\]
An induction argument shows that
\[
d(F_p, F_{p,k}) \leq \max\{p,q\}^k d(F_p, F_{p,0}) \leq \max\{p,q\}^k,
\]
where the last inequality uses the fact that both $F_p$ and $F_{p,0}$ attain their values in $[0,1]$ and hence $d(F_p,F_{p,0}) \leq 1$. In particular, $F_{p,k} \to F_p$ uniformly on $\mathbb{R}$; since all functions $F_{p,k}$ are continuous, it follows that $F_p$ is continuous as well. Since the convergence is exponentially fast, $F_{p,k}$ is an accurate approximation of $F_p$ even for moderate values of $k$.


\newpage

\section{Examples of singular functions}
\label{sec:cantor}

For particular choices of the functions $f_0$ and $f_1$ it follows that $F_p$ equals a singular function. Examples include Cantor-like functions, Lebesgue singular functions, and the Minkowski question mark function. We show how these examples can be obtained for explicit choices of $f_0$ and $f_1$.


\subsection{Cantor functions}

For $0 < a \leq b < 1$ consider the functions
\begin{equation}
\label{eq:f-example1}
f_0(x) = x/a
\quad\text{and}\quad
f_1(x) = (x-b)/(1-b).
\end{equation}
Clearly, these functions satisfy assumptions (A1)--(A3) with $s=\min\{1/a,1/(1-b)\}$. For this particular choice of $f_0$ and $f_1$ the function $F_p$ belongs to a family of Cantor-like functions. Figure \ref{fig:plot1} shows the graph of $F_{p}$ for two choices of the parameters $(a, b, p)$. The symmetry assumption (A4) holds if and only if $b=1-a$; Cantor functions that satisfy this symmetry condition play a role in extreme value laws for certain autoregressive processes~\cite{S:2025}. 

Since $f_0^{-1}(x) = ax$ and $f_1^{-1}(x) = (1-b)x + b$ are linear functions, it follows from equation~\eqref{eq:Cn-recursion} that the Lebesgue measure of the sets $C_n$ satisfies the following recursion:
\[
\Leb(C_{n+1})
	= a\Leb(C_n) + (1-b)\Leb(C_n)
	= (1-(b-a))\Leb(C_n).
\]
If $a<b$, then the total length of all intervals on which $F_p$ is constant is given by
\[
\Leb\bigg(\bigcup_{n=0}^\infty C_n\bigg) = (b-a)\sum_{n=0}^\infty (1-(b-a))^n = 1,
\]
which implies that $F_p$ has zero derivative almost everywhere.

\begin{figure}[h]
\centering
\includegraphics[width=0.45\textwidth]{./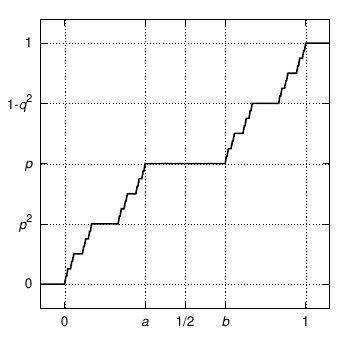}
\includegraphics[width=0.45\textwidth]{./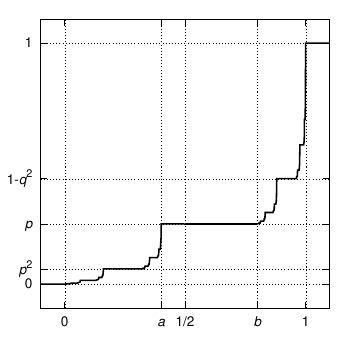}
\caption{Graphs of $F_p$ for the functions $f_0$ and $f_1$ as in equation \eqref{eq:f-example1}. Left: $(a,b,p)=(1/3,2/3,1/2)$ which is the classical Cantor function. Right: $(a,b,p)=(2/5,4/5,1/4)$.}
\label{fig:plot1}
\end{figure}

For the particular choice $a=1/3$, $b=2/3$, and $p=1/2$ the function $F_p$, when restricted to the unit interval, is indeed the classical Cantor function as defined in equation \eqref{eq:cantor_function}. This follows from the following characterization proven by Chalice \cite{Chalice:91}.

\begin{theorem}
\label{theorem:chalice}
Assume that a function $F : [0,1] \to \mathbb{R}$ satisfies the following properties:
\begin{enumerate}[(i)]
\item $F$ is non-decreasing;
\item $F(x/3) = F(x)/2$ for all $x \in [0,1]$;
\item $F(1-x) = 1 - F(x)$  for all $x \in [0,1]$.
\end{enumerate}
Then $F$ is the Cantor function.
\end{theorem}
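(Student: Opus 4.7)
The plan is to show that $F$ must coincide with the Cantor function $C$ on a dense subset of $[0,1]$ and then extend this by monotonicity and the continuity of $C$.

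First, I would establish the basic values. Taking $x=0$ in (ii) forces $F(0) = F(0)/2$, so $F(0)=0$, and then (iii) with $x=0$ gives $F(1)=1$. Applying (ii) with $x=1$ yields $F(1/3)=1/2$, and (iii) with $x=1/3$ yields $F(2/3)=1/2$. Monotonicity (i) then pins down $F \equiv 1/2$ on the middle third $[1/3, 2/3]$, which matches $C$ on that interval.

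The key step is to derive a companion scaling relation that handles the right third. Combining (ii) and (iii) I get
\[
F\Bigl(\frac{2+x}{3}\Bigr) = 1 - F\Bigl(\frac{1-x}{3}\Bigr) = 1 - \frac{F(1-x)}{2} = \frac{1}{2} + \frac{F(x)}{2},
\]
so $F$ satisfies the two self-similarity relations
\[
F\Bigl(\frac{x}{3}\Bigr) = \frac{F(x)}{2}, \qquad F\Bigl(\frac{2+x}{3}\Bigr) = \frac{1}{2} + \frac{F(x)}{2},
\]
which are precisely the defining relations of $C$ on the two outer thirds $[0,1/3]$ and $[2/3,1]$.

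I would then proceed by induction on the level $n$ of the Cantor construction. At level $n$, there are $2^n$ removed open middle-thirds of the form $J_{n,k} = ((3k+1)/3^n,(3k+2)/3^n)$ for appropriate indices $k$. Applying the two scaling relations $n$ times, each such interval is a rescaled copy of the original middle third $(1/3,2/3)$; combined with the base case $F\equiv 1/2$ on $[1/3,2/3]$, this shows $F$ is constant on the closure of each $J_{n,k}$ with the same dyadic value $C$ takes there. Hence $F=C$ on the union $U$ of closures of all removed intervals, together with the endpoints $0$ and $1$.

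Finally, I would note that the set $U$ is dense in $[0,1]$, since its complement is the nowhere-dense Cantor set. For any $x\in[0,1]$ and $\varepsilon>0$, choose $y_1, y_2 \in U$ with $y_1 \le x \le y_2$ and $y_2-y_1<\varepsilon$. Monotonicity gives $F(y_1) \le F(x) \le F(y_2)$ and $C(y_1)\le C(x) \le C(y_2)$, and since $F(y_i)=C(y_i)$ and $C$ is continuous, letting $\varepsilon\to 0$ forces $F(x)=C(x)$. The only mild subtlety is the density/continuity step at the end, but this is routine once $U$ has been identified as the complement of the Cantor set; the substantive part of the argument is deriving the second scaling relation from (ii) and (iii) and carrying out the induction cleanly.
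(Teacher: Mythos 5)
Your proof is correct, but note that the paper itself does not prove this theorem at all: it is quoted as a known characterization due to Chalice \cite{Chalice:91}, and the paper only verifies that $F_{1/2}$ satisfies hypotheses (i)--(iii). So your argument is a self-contained replacement for the citation. The route you take is the natural one: the base values $F(0)=0$, $F(1)=1$, $F\equiv 1/2$ on $[1/3,2/3]$; the derived companion relation $F\bigl((2+x)/3\bigr)=\tfrac12+\tfrac12 F(x)$, obtained correctly by combining (ii) and (iii); an induction showing $F$ agrees with $C$ on the closure of every removed middle-third interval, since each such interval is the image of $[1/3,2/3]$ under compositions of $x\mapsto x/3$ and $x\mapsto(2+x)/3$ and both $F$ and $C$ transform the same way under these maps; and finally the monotone sandwich using density of the removed intervals (the Cantor set is nowhere dense) together with continuity of $C$. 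The last step is handled properly: you only use continuity of $C$, never of $F$, which matters because the theorem does not assume $F$ continuous; and the sandwich works at every point of $(0,1)$ in the Cantor set because both adjacent gaps contain points of your set $U$, while $x=0,1$ are covered directly by $F(0)=0$, $F(1)=1$. What your approach buys over the paper's is a complete, elementary proof of the characterization within the same framework; what the citation buys the paper is brevity, since the substance there is only the verification that $F_{1/2}$ satisfies (i)--(iii).
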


Indeed, properties (i) and (iii) immediately follow from Theorem \ref{theorem:cantor_properties}. For all $y \in \mathbb{R}$ we have
\[
F_{1/2}(y) = \frac{1}{2}F_{1/2}(3y) + \frac{1}{2} F_{1/2}(3y-2),
\]
and by setting $y = x/3$ with $x \in [0,1]$ we obtain
\[
F_{1/2}(x/3) = F_{1/2}(x)/2,
\]
which shows that property (ii) is satisfied as well.


\subsection{Lebesgue singular functions}

\begin{figure}[h]
\centering
\includegraphics[width=0.45\textwidth]{./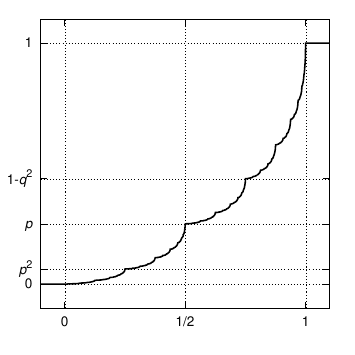}
\includegraphics[width=0.45\textwidth]{./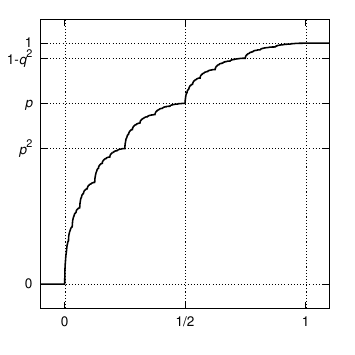}
\caption{Graphs of $F_p$ for the functions $f_0$ and $f_1$ as in equation \eqref{eq:f-example1} with $c=1/2$ and $p=1/4$ (left) and $p=3/4$ (right).}
\label{fig:plot2}
\end{figure}

Taking $a=b=1/2$ in equation \eqref{eq:f-example1} implies that $F_p$ satisfies the functional equation
\[
F_p(x) = p F_p(2x) + q F_p(2x-1).
\]
It is easy to verify that for $p=1/2$ the piecewise linear function
\[
F_{1/2}(x) =
\begin{cases}
0 & \text{if } x < 0, \\
x & \text{if } 0 \leq x \leq 1, \\ 
1 & \text{if } x > 1,
\end{cases}
\]
satisfies the above functional equation. In contrast, for $p \neq 1/2$ we obtain Lebesgue's singular function \cite{Sumi:2011}. In this case $F_p$ is strictly increasing and has no intervals on which the function is constant; see \cite{Kawamura:2011} and references therein. For two choices of $p$ the graph of $F_p$ is shown in Figure \ref{fig:plot2}.


\subsection{Minkowski's question mark function}

Consider the functions
\begin{equation}
\label{eq:f-example3}
\begin{split}
f_0(x)
	& =
\begin{cases}
x/(1-x)	& \text{if } 0 \leq x \leq 1/2, \\
2x		& \text{otherwise},
\end{cases} \\
f_1(x) & = 1-f_0(1-x).
\end{split}
\end{equation}
These functions satisfy assumptions (A1)--(A3) with $s=2$. By construction, assumption (A4) is trivially satisfied. For two choices of $p$ the graph of $F_p$ is shown in Figure \ref{fig:plot3}.

In the particular case $p=1/2$ it follows from Theorem \ref{theorem:cantor_properties}(iii) and (iv) we obtain the functional equations
\[
F_{1/2}(1-x) = 1-F_{1/2}(x)
\quad\text{for all}\quad
x \in \mathbb{R},
\]
and
\[
F_{1/2}(x/(x+1)) = F_{1/2}(x)/2
\quad\text{for all}\quad
x \in [0,1],
\]
Since $F_{1/2}$ is bounded as well, it follows that the restriction of $F_{1/2}$ to the unit interval is in fact the Minkowski question mark function \cite{Girgensohn:1996, Kairies:1997} which was originally introduced as an example of continuous and monotone function that maps rationals to dyadic rationals and quadratic irrationals to non-dyadic rationals.

\begin{figure}[h]
\centering
\includegraphics[width=0.45\textwidth]{./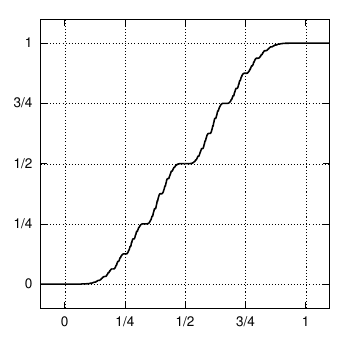}
\includegraphics[width=0.45\textwidth]{./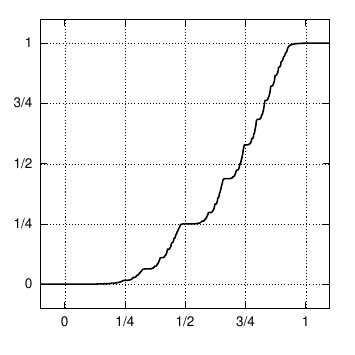}
\caption{Graphs of $F_p$ for the functions $f_0$ and $f_1$ as in equation \eqref{eq:f-example3} $p=1/2$ (left) and $p=1/4$ (right).}
\label{fig:plot3}
\end{figure}


\newpage

\section{Suggestions for further research}
\label{sec:discussion}

An obvious extension of the results above would be to consider random iteration with finitely many functions. Armstrong and Schaubroeck~\cite{ArmstrongSchaubroeck:2020} explored this idea for linear functions $f_k(x) = a_k x - b_k$ where $a_k$ and $b_k$ are non-negative integers that satisfy appropriate conditions. Their paper provides a full explanation on which intervals the resulting generalized Cantor function is constant, but only the case of linear functions and equal probabilities is considered.

Okamoto \cite{Okamoto:2005} introduced a one-parameter family of self-affine functions on the unit interval as the unique continuous solution of the functional equation
\[
F_a(x)
=
\begin{cases}
a F_a(3x)			& \text{if } 0 \leq x \leq 1/3, \\
(1-2a)F_a(3x-1) + a	& \text{if } 1/3 < x \leq 2/3, \\
a F_a(3x-2) + 1-a	& \text{if } 2/3 < x \leq 1,
\end{cases}
\]
where $0 < a < 1$. For $a=1/2$ the classical Cantor function is obtained, and for $a=5/6$ one obtains Perkins' continuous but nowhere differentiable function. An interesting question is whether all the members of this family can be obtained from random function iteration. The functional equation suggests that perhaps at least three functions would be needed.

Subshifts of finite type \cite{Berger:2001} provide a general framework for random iteration that also allows restrictions on the order in which the functions are chosen. Let $V$ be a set of $k$ distinct symbols. Since the particular choice of the labels of these symbols is irrelevant one can simply take $V=\{1,\dots,k\}$. Let $A$ be a $k \times k$ adjacency matrix with entries in $\{0,1\}$ and consider the following set of infinite sequences:
\[
\Omega_A
	= \big\{(\omega_1,\omega_2,\omega_3,\dots) \,:\, \omega_i \in V \text{ and } A_{\omega_i, \omega_{i+1}} = 1 \text{ for all } i \geq 1\big\}.
\]
Thus, the matrix $A$ encodes which symbol transitions are allowed. Take a stochastic $k\times k$ matrix $P$ such that $P_{i,j}>0$ if and only if $A_{i,j}=1$ and a $1 \times k$ probability vector $\pi$ such that $\pi P = \pi$ and define for each cylinder set
\[
\prob_{\pi, P}([\alpha_1,\alpha_2\dots,\alpha_n]) = \pi_{\alpha_1} P_{\alpha_1,\alpha_2}\dots P_{\alpha_{n-1},\alpha_n}.
\]
One can extend this to a so-called Markov measure on the $\sigma$-algebra generated by the cylinder sets. In addition, this measure is invariant under the one-sided shift. In this way, we have a framework for iterating finitely many functions with restrictions. It is then of interest what kind of singular functions can arise from this and how their properties depend on the choice of the functions $f_1,\dots,f_k$, the matrices $A$ and $P$, and the vector $\pi$.


\newpage

\addcontentsline{toc}{section}{References}

\end{document}